\newtheorem{theorem}{Theorem}[section]
\newtheorem{lemma}[theorem]{Lemma}
\theoremstyle{definition}
\theoremstyle{remark}
\newtheorem{remark}[theorem]{Remark}
\numberwithin{equation}{section}
\let \la=\lambda
\let \e=\varepsilon
\let \d=\delta
\let \o=\omega
\let \a=\alpha
\let \f=\varphi
\let \b=\beta
\let \g=\gamma
\let \O=\Omega
\let \G=\Gamma
\let \ga=\gamma
\begin{document}
\title[Sharp weighted norm inequalities]
{Sharp weighted norm inequalities for Littlewood-Paley operators and
singular integrals}

\author{Andrei K. Lerner}
\address{Department of Mathematics,
Bar-Ilan University, 52900 Ramat Gan, Israel}
\email{aklerner@netvision.net.il}

\begin{abstract}
We prove sharp $L^p(w)$ norm inequalities for the intrinsic square
function (introduced recently by M. Wilson) in terms of the $A_p$
characteristic of $w$ for all $1<p<\infty$. This implies the same
sharp inequalities for the classical Lusin area integral $S(f)$, the
Littlewood-Paley $g$-function, and their continuous analogs
$S_{\psi}$ and $g_{\psi}$. Also, as a corollary, we obtain sharp
weighted inequalities for any convolution Calder\'on-Zygmund
operator for all $1<p\le 3/2$ and $3\le p<\infty$, and for its
maximal truncations for $3\le p<\infty$.
\end{abstract}

\keywords{Littlewood-Paley operators, singular integrals, sharp
weighted inequalities}

\subjclass[2000]{42B20,42B25}

\maketitle

\section{Introduction}
Given a weight (i.e., a non-negative locally integrable function)
$w$, its $A_p, 1<p<\infty,$ characteristic is defined by
$$\|w\|_{A_p}\equiv\sup_Q\left(\frac{1}{|Q|}\int_Qw\,dx\right)
\left(\frac{1}{|Q|}\int_Qw^{-\frac{1}{p-1}}\,dx\right)^{p-1},$$
where the supremum is taken over all cubes $Q\subset {\mathbb R}^n$.

The main conjecture (which is implicit in  work of Buckley \cite{B})
concerning the behavior of singular integrals $T$ on $L^p(w)$ says
that
\begin{equation}\label{sing}
\|T\|_{L^p(w)}\le
c(T,n,p)\|w\|_{A_p}^{\max\big(1,\frac{1}{p-1}\big)}\quad(1<p<\infty).
\end{equation}

For Littlewood-Paley operators $S$ (we specify below the class of
such operators we shall deal with) it was conjectured in \cite{L1}
that
\begin{equation}\label{lp}
\|S\|_{L^p(w)}\le
c(S,n,p)\|w\|_{A_p}^{\max\big(\frac{1}{2},\frac{1}{p-1}\big)}\quad(1<p<\infty).
\end{equation}

Observe that the exponents $\max\big(1,\frac{1}{p-1}\big)$ in
(\ref{sing}) and $\max\big(\frac{1}{2},\frac{1}{p-1}\big)$ in
(\ref{lp}) are best possible for all $1<p<\infty$ (see
\cite{B,L1,L2}). Also, by the sharp version of the Rubio de Francia
extrapolation theorem \cite{DGPP}, inequality (\ref{sing}) for $p=2$
implies (\ref{sing}) for all $p>1$; analogously, it is enough to
prove (\ref{lp}) for $p=3$.

Currently conjecture (\ref{sing}) is proved for: the Hilbert
transform and Riesz transforms (Petermichl \cite{P1,P2}), the
Ahlfors-Beurling operator (Petermichl and Volberg \cite{PV}), any
one-dimensional Calder\'on-Zygmund convolution operator with
sufficiently smooth kernel (Vagharshakyan \cite{V}). The proofs in
\cite{P1,P2,PV} are based on the so-called Haar shift operators
combined with the Bellman function technique. The main idea in
\cite{V} is also based on Haar shifts.

Recently, Lacey, Petermichl and Reguera \cite{LPR} have established
sharp weighted estimates for Haar shift operators without the use of
Bellman functions; their proof uses a two-weight ``$Tb$ theorem" for
Haar shift operators due to Nazarov, Treil and Volberg \cite{NTV}.
This provides a unified approach to works \cite{P1,P2,PV,V}. Very
recently, a new, more elementary proof of this result, avoiding the
$Tb$ theorem, was given by Cruz-Uribe, Martell and P\'erez
\cite{CMP}; a key ingredient in \cite{CMP} was a decomposition of an
arbitrary measurable function in terms of local mean oscillations
obtained in \cite{L3}.

It was shown in \cite{CMP} that such a decomposition is very
convenient when dealing with certain dyadic type operators. In
particular, using these ideas, the authors proved in \cite{CMP}
conjecture (\ref{lp}) for the dyadic square function. Note that this
is the first result establishing (\ref{lp}) for all $p>1$.
Previously (\ref{lp}) was obtained for the dyadic square function in
the case $p=2$ by Hukovic, Treil and Volberg~\cite{HTV}, and,
independently by Wittwer \cite{Wit1}. Also, (\ref{lp}) in the case
$p=2$ was proved by Wittwer~\cite{Wit2} for the continuous square
function. By the extrapolation argument \cite{DGPP}, the linear
$\|w\|_{A_2}$ bound implies the bound by
$\|w\|_{A_p}^{\max(1,1/(p-1))}$ for all $p>1$. However, for $p>2$
this is not sharp for square functions. In~ \cite{L2}, the linear
bound for $p>2$ was improved to $\|w\|_{A_p}^{p'/2}$ for a large
class of Littlewood-Paley operators.

In this paper we show that similar arguments to those developed in
~\cite{CMP} work actually for essentially any important
Littlewood-Paley operator. To be more precise, we prove conjecture
(\ref{lp}) for the so-called intrinsic square function $G_{\a}$
introduced by Wilson \cite{Wi2}. As it was shown in \cite{Wi2}, the
intrinsic square function pointwise dominates both classical square
functions and their more recent analogs. As a result, we have the
following theorem.

\begin{theorem}\label{LP} Conjecture (\ref{lp}) holds for any one
of the following operators: the intrinsic square function
$G_{\a}(f)$, the Lusin area integral~$S(f)$, the Littlewood-Paley
function $g(f)$, the continuous square functions $S_{\psi}(f)$ and
$g_{\psi}(f)$.
\end{theorem}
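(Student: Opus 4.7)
The plan is to follow the two reductions already telegraphed in the introduction and then attack the resulting single inequality for $G_{\a}$ by the local mean oscillation decomposition of \cite{L3} in the spirit of \cite{CMP}. First, the sharp Rubio de Francia extrapolation theorem of \cite{DGPP} reduces (\ref{lp}) to the single endpoint $p=3$, where $\max(1/2,1/(p-1))=1/2$. Second, by the pointwise domination results of Wilson \cite{Wi2}, each of $S(f)$, $g(f)$, $S_{\psi}(f)$, and $g_{\psi}(f)$ is controlled pointwise by a constant multiple of $G_{\a}(f)$, so the entire theorem reduces to proving
\[
\|G_{\a}(f)\|_{L^3(w)}\le c\|w\|_{A_3}^{1/2}\|f\|_{L^3(w)}.
\]

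For this remaining inequality I would apply the local mean oscillation formula of \cite{L3} to the \emph{square} $G_{\a}(f)^2$ rather than to $G_{\a}(f)$ itself, since square functions aggregate in $\ell^2$ and the resulting dyadic model will be of sparse $\ell^2$-type. This should express $G_{\a}(f)^2$ pointwise, on any fixed large cube $Q_0$, as a sum over a sparse family of dyadic subcubes $\{Q^k_j\}$ of terms $\o_{\la}(G_{\a}(f)^2;Q^k_j)\chi_{Q^k_j}(x)$, plus an inessential median term. The central structural input to be established is then a pointwise localization of the form
\[
\o_{\la}\bigl(G_{\a}(f)^2;Q\bigr)\le C\Bigl(\frac{1}{|3Q|}\int_{3Q}|f|\,dy\Bigr)^2,
\]
which replaces the global operator on $Q$ by a purely local average of $|f|$ on the dilate $3Q$.

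The main obstacle I anticipate is precisely this localization estimate. The plan for it is to split $f=f\chi_{3Q}+f\chi_{\mathbb{R}^n\setminus 3Q}$, bound the local piece by the weak $(1,1)$ boundedness of $G_{\a}$ combined with a Chebyshev-type rearrangement argument to convert a distributional estimate into a bound on the median oscillation, and bound the far piece by exploiting the Lipschitz regularity built into Wilson's intrinsic family, which forces $G_{\a}(f\chi_{\mathbb{R}^n\setminus 3Q})$ to be nearly constant on $Q$ up to an error controlled by a Poisson-type average of $|f|$; squaring the resulting oscillation estimate yields the displayed bound.

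Once the oscillation estimate is secured, the proof is completed by bounding the sparse $\ell^2$-type operator $\bigl(\sum_{Q\in\mathcal{S}}(|3Q|^{-1}\int_{3Q}|f|)^2\chi_Q\bigr)^{1/2}$ on $L^3(w)$ by $C\|w\|_{A_3}^{1/2}\|f\|_{L^3(w)}$. This is the same dyadic object treated in \cite{CMP} for the dyadic square function, and its sharp weighted bound follows by a two-weight testing-type argument; the half-power in the $A_3$ characteristic reflects the $\ell^2$-aggregation of the sparse family, and dyadic-shift and sparseness considerations let one absorb the geometric factor coming from the use of $3Q$ in place of $Q$.
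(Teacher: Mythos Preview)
Your overall architecture---reduce to $p=3$ by sharp extrapolation, reduce to $G_{\a}$ by Wilson's pointwise domination, apply the decomposition of \cite{L3} to $G_{\a}(f)^2$, and finish with the sparse bound from \cite{CMP}---is exactly the paper's. The gap is in the oscillation estimate, and it is a real one.

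Your displayed bound $\o_{\la}(G_{\a}(f)^2;Q)\le C\bigl(|3Q|^{-1}\int_{3Q}|f|\bigr)^2$ is not true for $G_{\a}$ as written. Take $f$ supported far from $Q$: the right side vanishes, but $G_{\a}(f)(x)^2$ genuinely oscillates on $Q$ because the cone $\G(x)$ moves with $x$. The ``Lipschitz regularity built into Wilson's intrinsic family'' is regularity of $\f\mapsto f*\f_t(y)$ in the \emph{kernel} and in $y$; it says nothing about the dependence of $\int_{\G(x)}(\cdots)$ on the vertex $x$, which is a pure change-of-aperture effect. If you carry out the far-piece estimate honestly you get a tail of the form $\sum_{m\ge 0}2^{-m}\bigl(|2^mQ|^{-1}\int_{2^mQ}|f|\bigr)^2$, not a local average. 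Feeding that tail into the sparse operator and invoking the bound of Lemma~\ref{estag} scale by scale produces a factor $c(n,2^m)\sim 2^{mn}$ per scale, and $\sum_m 2^{-m}2^{mn}$ diverges for $n\ge 2$. There is also a secondary issue: since $G_{\a}$ is only sublinear, $G_{\a}(f)^2$ does not split additively under $f=f\chi_{3Q}+f\chi_{({3Q})^c}$, so the near/far decomposition of the \emph{input} does not cleanly decompose the oscillation of the \emph{output}.

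The paper circumvents all of this by not working with $G_{\a}$ directly. It passes to the pointwise equivalent dyadic model $\widetilde G_{\a}(f)^2=\sum_{Q\in\mathcal D}\g_Q(f)^2\chi_{3Q}$, splits $\mathcal D$ into Wilson's $3^n$ families $\mathcal D_k$ so that the triples $\{3Q:Q\in\mathcal D_k\}$ are nested, and proves the new combinatorial Lemma~\ref{prop}: every dyadic $Q$ sits inside some $3Q_k\subset 5Q$ with $Q_k\in\mathcal D_k$. On $3Q_k$ the sum over $\mathcal D_k$ then splits \emph{exactly} into a local part (cubes with $3Q'\subset 3Q_k$) plus a \emph{constant} (cubes with $3Q_k\subset 3Q'$); subtracting that constant leaves a genuinely local object dominated by $G_{4\sqrt n,\a}(f\chi_{9Q_k})^2$, and weak $(1,1)$ finishes. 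The point is that the dyadic surrogate makes the ``far piece'' literally constant on the relevant cube, so no Poisson tail ever appears; this is the idea your plan is missing.
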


In Section 2 below we give precise definitions of the operators
appeared in Theorem \ref{LP}.

We mention briefly the main difference between the proof of Theorem~
\ref{LP} and the corresponding proof for the dyadic square function
in~\cite{CMP}. Let ${\mathcal D}$ be the set of all dyadic cubes in
${\mathbb R}^n$. Dealing with the dyadic square function, we arrive
to the mean oscillation of the sum $\sum_{Q\in {\mathcal
D}}\xi_{Q}(f)\chi_Q$ on any dyadic cube $Q_0$. The corresponding
object can be easily handled because of the nice interaction between
any two dyadic cubes. Now, working with the intrinsic square
function, we have to estimate the mean oscillation of the sum
$\sum_{Q\in {\mathcal D}}\xi'_{Q}(f)\chi_{3Q}$ on any dyadic cube
$Q_0$. Here we use several tricks. First, as it was shown by Wilson
\cite{Wi1}, the set ${\mathcal D}$ can be divided into $3^n$
disjoint families ${\mathcal D}_k$ such that the cubes $\{3Q:Q\in
{\mathcal D}_k\}$ behave essentially as the dyadic cubes. Second,
given any dyadic cube $Q_0$, one can find in each family ${\mathcal
D}_k$ the cube $Q_k$ such that $Q_0\subset 3Q_k\subset 5Q_0$. This
is proved in Lemma~\ref{prop} below. Combining these tricks, we
arrive to exactly the same situation as described above for the
dyadic square function.

The Littlewood-Paley technique developed by Wilson in
\cite{Wi1,Wi2,Wi3} along with Theorem \ref{LP} allows us to get
conjecture (\ref{sing}) for classical Calder\'on-Zygmund operators
for any $p\in (1,3/2]\cup[3,\infty)$. For
$K_{\d}(x)=K(x)\chi_{\{|x|>\d\}}$ let
$$
Tf(x)=\lim_{\d\to 0}f*K_{\d}(x)\quad\text{and}\quad
T^*f(x)=\sup_{\d>0}|f*K_{\d}(x)|,
$$
where the kernel $K$ satisfies the standard conditions:
$$
|K(x)|\le\frac{c}{|x|^n}, \quad \int_{r<|x|<R}K(x)\,dx=0
\quad(0<r<R<\infty),
$$
and
$$
|K(x)-K(x-y)|\le\frac{c|y|^{\e}}{|x|^{n+\e}}\quad(|y|\le
|x|/2,\e>0).
$$

\begin{theorem}\label{SIN}
Conjecture (\ref{sing}) holds for $T$ for any $1<p\le 3/2$ and $3\le
p<\infty$. Also, (\ref{sing}) holds for $T^*$ for any $p\ge 3$.
\end{theorem}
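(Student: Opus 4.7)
The overall strategy is to reduce Theorem~\ref{SIN} to Theorem~\ref{LP} via Wilson's Littlewood-Paley technique \cite{Wi1,Wi2,Wi3}, combined with sharp extrapolation and duality. The core of the argument is to establish \eqref{sing} for $T$ at the single exponent $p_0 = 3$:
$$\|Tf\|_{L^3(w)} \le C \|w\|_{A_3} \|f\|_{L^3(w)}.$$
Once this is in hand, two automatic extensions will cover the full range claimed for $T$. First, the sharp extrapolation theorem of \cite{DGPP}, applied with $p_0 = 3$ and exponent $\alpha = 1$, yields \eqref{sing} for $T$ at every $p \ge 3$ (since $\max(1,1/(p-1)) = 1$ there). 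Second, for $1 < p \le 3/2$, I would invoke duality: the adjoint of $T$ is a convolution CZ operator with kernel $\overline{K(-x)}$, and
$$\|T\|_{L^p(w) \to L^p(w)} = \|T^{\mathrm{adj}}\|_{L^{p'}(\sigma) \to L^{p'}(\sigma)}, \qquad \|\sigma\|_{A_{p'}} = \|w\|_{A_p}^{1/(p-1)},$$
with $\sigma = w^{1-p'}$. Since $p' \ge 3$, applying the already-obtained bound to $T^{\mathrm{adj}}$ gives the conjectured exponent $1/(p-1) = \max(1,1/(p-1))$ at $p \le 3/2$.

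For the $L^3(w)$ bound itself, the plan is to use Wilson's weighted Littlewood-Paley estimate. Let $\phi$ be a smooth radial bump with mean zero and set $\psi = T\phi$; the identity $\phi_t * Tf = (\phi_t * K) * f$ (exact; equal to $\psi_t * f$ for homogeneous $K$, and a suitable variant in general) together with the quantitative reverse Littlewood-Paley inequality valid for $w \in A_\infty$ produces
$$\|Tf\|_{L^p(w)} \le C \|w\|_{A_\infty}^{1/2}\, \|S_\psi f\|_{L^p(w)}, \qquad w \in A_\infty,\ p \ge 2.$$
At $p = 3$, Theorem~\ref{LP} supplies $\|S_\psi f\|_{L^3(w)} \le C\|w\|_{A_3}^{1/2}\|f\|_{L^3(w)}$, and since $\|w\|_{A_\infty} \le \|w\|_{A_3}$, the two factors of $\|w\|_{A_3}^{1/2}$ combine to give the desired bound.

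For the maximal truncations $T^*$ at $p \ge 3$, I would establish the analogous inequality
$$\|T^*f\|_{L^p(w)} \le C \|w\|_{A_\infty}^{1/2}\, \|S_\psi f\|_{L^p(w)}$$
by extending Wilson's argument so that the pointwise bound on $|\phi_t * T_\delta f|$ is taken uniformly in the truncation parameter $\delta$ (Cotlar-type arguments won't suffice here, as they would overshoot the exponent at $p = 3$). Sharp extrapolation from $p_0 = 3$ then delivers \eqref{sing} for $T^*$ at all $p \ge 3$.

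The main technical obstacle is controlling the $A_\infty$-dependence of the Littlewood-Paley step with exactly the exponent $1/2$: a naive Cauchy-Schwarz pairing of $\langle Tf,h\rangle$ via Calder\'on's reproducing formula yields a bound of the form $\|S_\psi f\|_{L^p(w)}\|S_\phi h\|_{L^{p'}(\sigma)}$, and chaining Theorem~\ref{LP} on each factor produces a combined exponent of $3/2$ at $p=3$, which is too large. Extracting the correct $A_\infty^{1/2}$ factor requires Wilson's finer $A_\infty$ analysis, and this suboptimality at generic $p$ is precisely what confines the method to $p \ge 3$ (and, by duality, $p \le 3/2$), leaving the intermediate range $3/2 < p < 3$ open.
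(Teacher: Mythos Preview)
Your overall architecture matches the paper's: Wilson's $A_\infty^{1/2}$ Littlewood--Paley inequality, Theorem~\ref{LP}, and duality for $1<p\le 3/2$. Two points, however, diverge from what the paper actually does.

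First, the route through $\psi=T\phi$ has a gap: such a $\psi$ is not compactly supported (nor in ${\mathcal C}_\alpha$), so Theorem~\ref{LP} as stated does not apply to $S_\psi$. The paper avoids this entirely by keeping the \emph{nice} kernel $\psi$ and invoking Wilson's pointwise estimate $S_{\psi,\beta}(Tf)(x)\le c\,G_{\alpha}(f)(x)$ (see \eqref{poes}); Theorem~\ref{LP} is then applied to $G_\alpha$, not to $S_{T\phi}$. This also dispenses with the homogeneity caveat you flagged.

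Second, your treatment of $T^*$ misjudges the Cotlar route. You are right that the naive bound $T^*f\le c(M(Tf)+Mf)$ loses a power of $\|w\|_{A_p}$ through Buckley. But the paper uses the version with the \emph{grand maximal function}, $T^*f\le c(\mathcal{M}(Tf)+Mf)$, and Wilson's inequality \eqref{GN} controls $\mathcal{M}(Tf)$ directly by $\|w\|_{A_\infty}^{1/2}\|S_{\psi,3\sqrt n}(Tf)\|_{L^p(w)}$, bypassing Buckley altogether. Chaining this with \eqref{poes} and Theorem~\ref{LP} gives $\|T^*f\|_{L^p(w)}\le c\|w\|_{A_p}\|f\|_{L^p(w)}$ for every $p\ge 3$ in one stroke---no need to extend Wilson's argument uniformly in the truncation parameter, and no separate extrapolation step (the paper works directly at each $p\ge 3$, then deduces $T$ from $T^*$ and dualizes).
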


Notice that for the maximal Hilbert, Riesz and Ahlfors-Beurling
transforms conjecture (\ref{sing}) was recently proved for any $p>1$
by Hyt\"onen {\it et. al.} \cite{HLRV}; a different proof for the
same operators is given in \cite{CMP}.

The proof of Theorem \ref{SIN} is based essentially on Theorem
\ref{LP}, on the pointwise estimate $S_{\psi}(Tf)(x)\le
cG_{\a}(f)(x)$ (proved in \cite{Wi2,Wi3}), and on a version of the
Chang-Wilson-Wolff theorem \cite{CWW} proved in \cite{Wi1}.

\section{Littlewood-Paley operators}
Let ${\mathbb R}^{n+1}_+={\mathbb R}^{n}\times{\mathbb R}_{+}$ and
$\G_{\b}(x)=\{(y,t)\in {\mathbb{R}}^{n+1}_+:|y-x|<\b t\}$. Here and
below we drop the subscript $\b$ if $\b=1$. Set
$\f_t(x)=t^{-n}\f(x/t)$.

The classical square functions are defined as follows. If
$u(x,t)=P_t*f(x)$ is the Poisson integral of $f$, the Lusin area
integral $S_{\b}$ and the Littlewood-Paley $g$-function are defined
respectively by
$$
S_{\b}(f)(x)=\left(\int_{\G_{\b}(x)}|\nabla
u(y,t)|^2\frac{dydt}{t^{n-1}}\right)^{1/2}
$$
and
$$
g(f)(x)=\left(\int_0^{\infty}t|\nabla u(x,t)|^2dt\right)^{1/2}.
$$

The modern (real-variable) variants of $S_{\b}$ and $g$ can be
defined in the following way. Let $\psi\in C^{\infty}({\mathbb
R}^n)$ be radial, supported in $\{x:|x|\le 1\}$, and $\int\psi=0$.
The continuous square functions $S_{\psi,\b}$ and $g_{\psi}$ are
defined by
$$S_{\psi,\b}(f)(x)=\left(\int_{\G_{\b}(x)}|f*\psi_t(y)|^2
\frac{dydt}{t^{n+1}}\right)^{1/2}$$ and
$$
g_{\psi}(f)(x)=\left(\int_0^{\infty}|f*\psi_t(x)|^2
\frac{dt}{t}\right)^{1/2}.
$$

In \cite{Wi2} (see also \cite[p. 103]{Wi3}), it was introduced a new
square function which is universal in a sense. This function is
independent of any particular kernel $\psi$, and it dominates
pointwise all the above defined square function. On the other hand,
it is not essentially larger than any particular $S_{\psi,\b}(f)$.
For $0<\a\le 1$, let ${\mathcal C}_{\a}$ be the family of functions
supported in $\{x:|x|\le 1\}$, satisfying $\int\psi=0$, and such
that for all $x$ and $x'$, $|\f(x)-\f(x')|\le |x-x'|^{\a}$. If $f\in
L^1_{\text{loc}}({\mathbb R}^n)$ and $(y,t)\in {\mathbb R}^{n+1}_+$,
we define
$$
A_{\a}(f)(y,t)=\sup_{\f\in {\mathcal C}_{\a}}|f*\f_t(y)|.
$$
The intrinsic square function is defined by
$$
G_{\b,\a}(f)(x)=\left(\int_{\G_{\b}(x)}
\big(A_{\a}(f)(y,t)\big)^2\frac{dydt}{t^{n+1}}\right)^{1/2}.
$$
If $\b=1$, set $G_{1,\a}(f)=G_{\a}(f)$.

We mention several properties of $G_{\a}(f)$ (for the proofs we
refer to \cite{Wi2} and \cite[Ch. 6]{Wi3}). First of all, it is of
weak type $(1,1)$:
\begin{equation}\label{weak}
|\{x\in {\mathbb R}^n:G_{\a}(f)(x)>\la\}|\le
\frac{c(n,\a)}{\la}\int_{{\mathbb R}^n}|f|\,dx.
\end{equation}
Second, if $\b\ge 1$, then for all $x\in {\mathbb R}^n$,
\begin{equation}\label{estG}
G_{\b,\a}(f)(x)\le c(\a,\b,n)G_{\a}(f)(x).
\end{equation}
Third, if $S$ is anyone of the Littlewood-Paley operators defined
above, then
\begin{equation}\label{poest}
S(f)(x)\le cG_{\a}(f)(x),
\end{equation}
where the constant $c$ is independent of $f$ and $x$.

\section{Dyadic cubes}

We say that $I\subset {\mathbb R}$ is a dyadic interval if $I$ is of
the form $(\frac{j}{2^k},\frac{j+1}{2^k})$ for some integers $j$ and
$k$. A dyadic cube $Q\subset {\mathbb R}^n$ is a Cartesian product
of $n$ dyadic intervals of equal lengths. Let ${\mathcal D}$ be the
set of all dyadic cubes in ${\mathbb R}^n$.

Denote by $\ell_Q$ the side length of $Q$. Given $r>0$, let $rQ$ be
the cube with the same center as $Q$ such that $\ell_{rQ}=r\ell_Q$.

The following result can be found in \cite[Lemma 2.1]{Wi1} or in
\cite[p.~91]{Wi3}.

\begin{lemma}\label{wil} There exist disjoint families ${\mathcal
D}_1,\dots {\mathcal D}_{3^n}$ of dyadic cubes such that ${\mathcal
D}=\cup_{k=1}^{3^n}{\mathcal D}_k$, and, for every $k$, if $Q_1,Q_2$
are in ${\mathcal D}_k$, then $3Q_1$ and $3Q_2$ are either disjoint
or one is contained in the other.
\end{lemma}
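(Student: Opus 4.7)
The plan is to construct the $3^n$ families explicitly via a coordinate-wise classification that records the position of the corner of $3Q$ modulo $3$, corrected for the parity of the scale.

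For a dyadic cube $Q=\prod_{i=1}^n [m_i2^k,(m_i+1)2^k]$ (with $\ell_Q=2^k$, $k\in\mathbb Z$, $m_i\in\mathbb Z$), define its type $\tau(Q)\in\{0,1,2\}^n$ by
$$
\tau_i(Q)\equiv (m_i-1)\cdot 2^{k\bmod 2}\pmod 3,\qquad i=1,\dots,n,
$$
so the factor $2^{k\bmod 2}$ is $1$ for $k$ even and $2$ for $k$ odd; this is just $2^k\bmod 3$, well-defined for every $k\in\mathbb Z$ by $2^2\equiv 1\pmod 3$. As $a$ ranges over $\{0,1,2\}^n$, the sets $\mathcal{D}_a:=\{Q\in\mathcal{D}:\tau(Q)=a\}$ form a partition of $\mathcal{D}$ into the $3^n$ families of the lemma.

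The main step is to verify the nested/disjoint property. I would fix $Q_1,Q_2\in\mathcal{D}_a$ with $k_1\le k_2$ and project on each coordinate axis. The $i$-th projection of $3Q_j$ is $[e_j^i,\,e_j^i+3\cdot 2^{k_j}]$ with $e_j^i:=(m_j^i-1)2^{k_j}$, and $e_1^i,e_2^i\in 2^{k_1}\mathbb Z$. The equality $\tau_i(Q_1)=\tau_i(Q_2)$, combined with $2^2\equiv 1\pmod 3$, yields after a short congruence manipulation
$$
(m_1^i-1)\equiv(m_2^i-1)\cdot 2^{k_2-k_1}\pmod 3,
$$
which, multiplied by $2^{k_1}$, is equivalent to $e_1^i\equiv e_2^i\pmod{3\cdot 2^{k_1}}$. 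On the grid of side $3\cdot 2^{k_1}$ through $e_2^i$, the projection of $3Q_1$ is then a single cell while that of $3Q_2$ is a union of $2^{k_2-k_1}$ consecutive cells; so in each coordinate the two projections either nest ($3Q_1^i\subset 3Q_2^i$) or meet only at a boundary point. If some coordinate gives the latter, $3Q_1\cap 3Q_2$ has measure zero; otherwise every coordinate nests and $3Q_1\subset 3Q_2$.

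The main obstacle is finding the right definition of $\tau$. A naive classification by $m_i\bmod 3$ alone (ignoring $k$) fails: the residue mod $3$ of $e_i(Q)=(m_i-1)2^k$ depends on both $m_i-1$ and $2^k\bmod 3$, and the latter alternates between $1$ and $2$ with the parity of $k$, so two naive same-type cubes at scales of different parity can cross each other's $3Q$-boundaries. The scale factor $2^{k\bmod 2}$ in $\tau_i$ absorbs this alternation and makes the classification scale-consistent, after which the proof reduces to a one-line congruence in each coordinate.
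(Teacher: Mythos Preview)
Your argument is correct. The congruence bookkeeping is sound: from $\tau_i(Q_1)=\tau_i(Q_2)$ you correctly extract $(m_1^i-1)\equiv (m_2^i-1)\,2^{k_2-k_1}\pmod 3$, and since both endpoints $e_j^i$ lie in $2^{k_1}\mathbb Z$, this is exactly $e_1^i\equiv e_2^i\pmod{3\cdot 2^{k_1}}$. The grid-cell argument then cleanly gives the dichotomy. One cosmetic point: you write dyadic cubes as closed, while the paper takes them open; with open cubes your ``meet only at a boundary point'' case is genuinely disjoint, matching the lemma's wording, so just align the convention.

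As for comparison: the paper does not actually prove this lemma. It cites Wilson for the result and only records the observation that the $n$-dimensional statement follows from the one-dimensional one by taking products of the three one-dimensional families $\mathcal I_1,\mathcal I_2,\mathcal I_3$. Your construction is compatible with that product structure --- your type $\tau(Q)\in\{0,1,2\}^n$ is precisely a product of one-dimensional types --- but you go further and supply the actual mechanism in each coordinate, namely the parity-corrected residue $(m_i-1)\cdot 2^{k\bmod 2}\bmod 3$. This is the substance Wilson's lemma rests on, and it is nice to see it written out; the paper's later Lemma~\ref{prop} (finding $Q_k\in\mathcal D_k$ with $Q\subset 3Q_k\subset 5Q$) also becomes transparent from your explicit description, since one simply shifts $m_i$ by $0$ or $\pm 1$ to hit any prescribed type.
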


Observe that it suffices to prove the lemma in the one-dimensional
case. Indeed, if ${\mathcal I}$ is the set of all dyadic intervals
in ${\mathbb R}$ and ${\mathcal I}=\cup_{j=1}^3{\mathcal I}_j$ is
the representation from Lemma \ref{wil} in the case $n=1$, then the
required families in ${\mathbb R}^n$ are of the form
$${\mathcal D}_k=\Big\{\prod_{m=1}^nI_m:I_m\in {\mathcal
I}_{\a_i},\a_i\in \{1,2,3\}\Big\}\quad(k=1,\dots,3^n).$$

The following property of the families ${\mathcal D}_k$ will play an
important role in the proof of Theorem \ref{LP}.

\begin{lemma}\label{prop} For any cube $Q\in {\mathcal D}$
and for each $k=1,\dots,3^n$ there is a cube $Q_k\in {\mathcal D}_k$
such that $Q\subset 3Q_k\subset 5Q$.
\end{lemma}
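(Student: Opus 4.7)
My strategy is to reduce to one dimension and then apply a pigeonhole argument to the nested-or-disjoint property guaranteed by Lemma~\ref{wil}. The product decomposition noted immediately after Lemma~\ref{wil} already presents $\mathcal{D}_k$ as $\prod_{m=1}^n\mathcal{I}_{\a_m(k)}$, with $k\mapsto(\a_1(k),\ldots,\a_n(k))$ running bijectively over $\{1,2,3\}^n$. Consequently it is enough to prove the one-dimensional analogue: for any dyadic interval $I$, each of the three families $\mathcal{I}_1,\mathcal{I}_2,\mathcal{I}_3$ contains some interval $J$ with $I\subset 3J\subset 5I$. Granted this, I will build $Q_k$ by choosing such a $J_m\in\mathcal{I}_{\a_m(k)}$ in each coordinate interval $I_m$ of $Q=\prod_m I_m$ and setting $Q_k=\prod_m J_m$; then $Q_k\in\mathcal{D}_k$ and the double inclusion $Q\subset 3Q_k\subset 5Q$ holds coordinatewise.

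For the one-dimensional step, my candidates will be only $I$ itself together with its two immediate dyadic neighbors $I^-$ and $I^+$ of the same side length $\ell$. A direct check shows that each $J\in\{I^-,I,I^+\}$ already satisfies $I\subset 3J\subset 5I$, because $3J$ and $5I$ are intervals of lengths $3\ell$ and $5\ell$ whose centers differ by at most $\ell$. The whole problem thus reduces to verifying that these three candidates occupy three distinct Wilson families.

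This last verification is the crux, and Lemma~\ref{wil} does the work for us. If two distinct members $J_1,J_2\in\{I^-,I,I^+\}$ both lay in some $\mathcal{I}_j$, then $3J_1$ and $3J_2$ would have to be nested or disjoint; but they are two distinct intervals of equal length $3\ell$ whose centers are separated by either $\ell$ or $2\ell$, so they overlap properly while neither contains the other, ruling out both alternatives. Since there are exactly three candidates and three families, pigeonhole then forces one candidate into each family, which completes the one-dimensional case. The only delicate point is recognizing this pigeonhole step; beyond it, no further obstacle is expected, since the $n$-dimensional claim is purely a coordinatewise consequence of the $1$-dimensional one.
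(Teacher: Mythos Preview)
Your argument is correct and follows essentially the same route as the paper: reduce to one dimension via the product description of the families $\mathcal{D}_k$, take the given interval together with its two immediate dyadic neighbors of the same length, observe that the triples $3J$ pairwise overlap without nesting so that Lemma~\ref{wil} forces the three candidates into three distinct families, and then rebuild $Q_k$ coordinatewise. The only cosmetic difference is that you phrase the separation step explicitly as a pigeonhole argument, whereas the paper simply asserts it.
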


\begin{proof}
Let us consider first the one-dimensional case. Assume that
${\mathcal I}=\cup_{j=1}^3{\mathcal I}_j$ is the representation from
Lemma \ref{wil}.

Take an arbitrary dyadic interval
$J=(\frac{j}{2^k},\frac{j+1}{2^k})$. Set $J_1=J$. Consider the
dyadic intervals $J_2=(\frac{j-1}{2^k},\frac{j}{2^k})$ and
$J_3=(\frac{j+1}{2^k},\frac{j+2}{2^k})$. It is easy to see that each
two different intervals of the form $3J_l$  are neither disjoint nor
one is contained in the other. Therefore, the intervals $J_l$ lie in
the different families ${\mathcal I}_j$. Also, $J\subset 3J_l\subset
5J$ for $l=1,2,3$.

Consider now the multidimensional case. Take an arbitrary cube $Q\in
{\mathcal D}$. Then $Q=\prod_{m=1}^nI_m$, where $I_m\in {\mathcal
I}$ and $\ell_{I_m}=h$ for each $m$. Fix $\a_i\in \{1,2,3\}$. We
have already proved that there exists $\widetilde I_m\in {\mathcal
I}_{\a_i}$ such that $I_m\subset 3\widetilde I_m\subset 5I_m$.
Observe also that, by the one-dimensional construction,
$\ell_{\widetilde I_m}=\ell_{I_m}=h$. Therefore, setting
$Q_k=\prod_{m=1}^n\widetilde I_m$, we obtain the required cube from
${\mathcal D}_k$.
\end{proof}

\section{Local mean oscillations}
Given a measurable function $f$ on ${\mathbb R}^n$ and a cube $Q$,
define the local mean oscillation of $f$ on $Q$ by
$$\o_{\la}(f;Q)=\inf_{c\in {\mathbb R}}
\big((f-c)\chi_{Q}\big)^*\big(\la|Q|\big)\quad(0<\la<1),$$ where
$f^*$ denotes the non-increasing rearrangement of $f$. The local
sharp maximal function relative to $Q$ is defined by
$$M^{\#}_{\la;Q}f(x)=\sup_{x\in Q'\subset
Q}\o_{\la}(f;Q'),$$ where the supremum is taken over all cubes
$Q'\subset Q$ containing the point $x$.

By a median value of $f$ over $Q$ we mean a possibly nonunique, real
number $m_f(Q)$ such that
$$\max\big(|\{x\in Q: f(x)>m_f(Q)\}|,|\{x\in Q: f(x)<m_f(Q)\}|\big)\le |Q|/2.$$
It follows from the definition that
\begin{equation}\label{infty}
|m_f(Q)|\le (f\chi_Q)^*(|Q|/2).
\end{equation}

Given a cube $Q_0$, denote by ${\mathcal D}(Q_0)$ the set of all
dyadic cubes with respect to $Q_0$ (that is, they are formed by
repeated subdivision of $Q_0$ and each of its descendants into $2^n$
congruent subcubes). Observe that if $Q_0\in {\mathcal D}$, then the
cubes from ${\mathcal D}(Q_0)$ are also dyadic in the usual sense as
defined in the previous section.

If $Q\in {\mathcal D}(Q_0)$ and $Q\not=Q_0$, we denote by $\widehat
Q$ its dyadic parent, that is, the unique cube from ${\mathcal
D}(Q_0)$ containing $Q$ and such that $|\widehat Q|=2^n|Q|$.

The following result has been recently proved in \cite{L3}.

\begin{theorem}\label{main} Let $f$ be a measurable function on
${\mathbb R}^n$ and let $Q_0$ be a fixed cube. Then there exists a
(possibly empty) collection of cubes $Q_j^k\in {\mathcal D}(Q_0)$
such that
\begin{enumerate}
\renewcommand{\labelenumi}{(\roman{enumi})}
\item
for a.e. $x\in Q_0$,
$$
|f(x)-m_f(Q_0)|\le 4M_{1/4;Q_0}^{\#}f(x)+4\sum_{k=1}^{\infty}\sum_j
\o_{\frac{1}{2^{n+2}}}(f;\widehat Q_j^k)\chi_{Q_j^k}(x);
$$
\item
for each fixed $k$ the cubes $Q_j^k$ are pairwise disjoint;
\item
if $\O_k=\cup_jQ_j^k$, then $\O_{k+1}\subset \O_k$;
\item
$|\O_{k+1}\cap Q_j^k|\le \frac{1}{2}|Q_j^k|.$
\end{enumerate}
\end{theorem}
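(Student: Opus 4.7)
My plan is to prove Theorem \ref{main} by an iterative Calder\'on--Zygmund stopping-time procedure inside $Q_0$, using the local mean oscillation $\omega_\lambda$ with $\lambda:=1/2^{n+2}$ as the stopping functional, and then deriving (i) via a telescoping-in-medians identity. The guiding one-step estimate is that for every dyadic $Q\in\mathcal D(Q_0)$ with $Q\ne Q_0$,
\begin{equation*}
|m_f(Q)-m_f(\widehat Q)|\le 2\,\omega_\lambda(f;\widehat Q).
\end{equation*}
Indeed, if $c$ is nearly optimal in the infimum defining $\omega_\lambda(f;\widehat Q)$, the exceptional set $\{x\in\widehat Q:|f(x)-c|>\omega_\lambda(f;\widehat Q)\}$ has measure at most $\lambda|\widehat Q|=|Q|/4$, so it occupies less than half the measure of both $Q$ and $\widehat Q$, which forces both $m_f(Q)$ and $m_f(\widehat Q)$ to lie within $\omega_\lambda(f;\widehat Q)$ of $c$.

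\textbf{Construction and (ii)--(iv).} Set $\mathcal F_0=\{Q_0\}$ and $\Omega_0=Q_0$. Given $\mathcal F_k$, for each $P\in\mathcal F_k$ pick a near-optimal constant $c_P$ in the definition of $\omega_\lambda(f;P)$, and let $\mathcal F_{k+1}(P)$ be the family of maximal dyadic cubes $Q'\in\mathcal D(P)$, $Q'\subsetneq P$, such that
\begin{equation*}
\bigl|\{x\in Q':|f(x)-c_P|>\omega_\lambda(f;P)\}\bigr|>|Q'|/2.
\end{equation*}
Put $\mathcal F_{k+1}:=\bigcup_{P\in\mathcal F_k}\mathcal F_{k+1}(P)$, $\Omega_{k+1}:=\bigcup_{Q\in\mathcal F_{k+1}}Q$, and enumerate the cubes as $\{Q_j^{k+1}\}_j$. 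Properties (ii) and (iii) are immediate from the maximality and nesting. For (iv), the pairwise disjointness of the cubes in $\mathcal F_{k+1}(P)$ combined with the ``more than half'' condition yields $|\Omega_{k+1}\cap P|\le 2\lambda|P|=|P|/2^{n+1}\le|P|/2$.

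\textbf{Pointwise bound.} For a.e.\ $x\in Q_0$, let $k_0\ge 0$ be the largest index with $x\in\Omega_{k_0}$ (the intersection $\bigcap_k\Omega_k$ is null by (iv)), and let $P^{k_0}\in\mathcal F_{k_0}$ contain $x$. Maximality of $\mathcal F_{k_0+1}(P^{k_0})$ forces every dyadic $R\in\mathcal D(P^{k_0})$ with $R\subsetneq P^{k_0}$ and $x\in R$ to satisfy $|R\cap\{|f-c_{P^{k_0}}|>\omega_\lambda(f;P^{k_0})\}|\le|R|/2$; letting $R\downarrow\{x\}$ and invoking Lebesgue differentiation yields $|f(x)-c_{P^{k_0}}|\le\omega_\lambda(f;P^{k_0})$, and so $|f(x)-m_f(P^{k_0})|\le 2\omega_\lambda(f;P^{k_0})$. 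Telescoping along the nested ancestors $P^{k_0}\subsetneq\cdots\subsetneq P^0=Q_0$, and applying the one-step estimate to the dyadic chain connecting each consecutive pair $P^i\subsetneq P^{i-1}$, bounds $|f(x)-m_f(Q_0)|$ by a sum of $\omega_\lambda(f;R)$ over dyadic cubes $R$ appearing in these chains.

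\textbf{Main obstacle.} The principal technical point is to split this sum into (a) the ``stopping'' contributions $R=\widehat Q_j^k$ with $x\in Q_j^k\in\mathcal F_k$, which reassemble into the desired $\sum_{k,j}\omega_\lambda(f;\widehat Q_j^k)\chi_{Q_j^k}(x)$, and (b) the ``intermediate'' dyadic cubes in the chains that are not dyadic parents of stopping cubes. For the latter, non-selection gives $|R\cap\{|f-c_{P^{i-1}}|>\omega_\lambda(f;P^{i-1})\}|\le|R|/2$, pinning $m_f(R)$ within $\omega_\lambda(f;P^{i-1})$ of $c_{P^{i-1}}$ and permitting a careful comparison with $M^{\#}_{1/4;Q_0}f(x)$ on cubes containing $x$. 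Reconciling the stopping index $\lambda=1/2^{n+2}$ with the local sharp index $1/4$ without losing the absolute constants $4$ and $4$ in (i) is the most delicate point of the argument.
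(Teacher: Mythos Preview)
The paper does not actually prove this theorem; it is quoted verbatim from \cite{L3} and used as a black box. So there is no in-paper argument to compare against. That said, your Calder\'on--Zygmund stopping-time framework, the one-step median estimate $|m_f(Q)-m_f(\widehat Q)|\le 2\,\o_\la(f;\widehat Q)$, and your verification of (ii)--(iv) are all correct and are essentially the approach taken in \cite{L3}.

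Your proposal, however, is not a complete proof, and you acknowledge this yourself by labelling the final step a ``Main obstacle'' and calling the index reconciliation ``the most delicate point'' without carrying it out. The gap is real. After telescoping through the \emph{entire} dyadic chain between consecutive stopping cubes $P^i\subsetneq P^{i-1}$, you obtain a sum of terms $\o_\la(f;R)$ over all dyadic $R$ in that chain. The non-selection argument you invoke for the intermediate $R$ pins the \emph{medians} $m_f(R)$ near $c_{P^{i-1}}$, but says nothing about the oscillations $\o_\la(f;R)$ themselves, so it does not bound the sum you have written down. If instead you collapse each segment directly---which your ingredients do yield---you get
\[
|m_f(P^i)-m_f(P^{i-1})|\le 2\,\o_\la(f;\widehat{P^i})+2\,\o_\la(f;P^{i-1}),
\]
and summing leaves you with the extra contribution $\sum_{i}\o_\la(f;P^{i})$ at the stopping cubes themselves. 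Since $\la=2^{-(n+2)}<1/4$ and $\o_\la\ge \o_{1/4}$, these terms cannot be absorbed into $M^{\#}_{1/4;Q_0}f(x)$, nor are they of the form $\o_\la(f;\widehat{Q_j^k})$ required by (i). Closing this requires a sharper accounting (or a slightly different stopping rule) than what you have sketched; the details are in \cite{L3}.
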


\begin{remark}\label{rem}
The proof of Theorem \ref{main} shows that actually
$M_{1/4;Q_0}^{\#}f$ can be replaced by a smaller dyadic operator,
that is, by
$$
M_{1/4;Q_0}^{\#,d}f(x)=\sup_{Q\ni x, Q\in {\mathcal
D}(Q_0)}\o_{1/4}(f;Q).
$$
\end{remark}
Note that Theorem \ref{main} is a development of ideas going back to
works of Carleson \cite{C}, Garnett-Jones \cite{GJ} and Fujii
\cite{F}.

We mention a simple property of local mean oscillations which will
be used below.

\begin{lemma}\label{simpr} For any $k\in {\mathbb N}$ and for each
cube $Q$,
\begin{equation}\label{finsum}
\o_{\la}\Big(\sum_{i=1}^kf_i;Q\Big)\le
\sum_{i=1}^k\o_{\la/k}(f_i;Q)\quad(0<\la<1).
\end{equation}

\begin{proof}
It is well known (see, e.g., \cite[p. 41]{BS}) that
$$(f+g)^*(t_1+t_2)\le f^*(t_1)+g^*(t_2)\quad(t_1,t_2>0).$$
This property is easily extended to any finite sum of functions:
$$\Big(\sum_{i=1}^kf_i\Big)^*(t)\le \sum_{i=1}^k(f_i)^*(t/k).$$
Hence, for arbitrary $\xi_i\in {\mathbb R}$ we have
\begin{eqnarray*}
\o_{\la}\Big(\sum_{i=1}^kf_i;Q\Big)&=&\inf_{c\in {\mathbb
R}}\Big(\big(\sum_{i=1}^kf_i-c\big)\chi_Q\Big)^*(\la|Q|)\\
&=&\inf_{c\in {\mathbb
R}}\Big(\big(f_1-c+\sum_{i=2}^k(f_i-\xi_i)\big)\chi_Q\Big)^*(\la|Q|)\\
&\le& \o_{\la/k}(f_1;Q)+\sum_{i=2}^k((f_i-\xi_i)\chi_Q)^*(\la|Q|/k).
\end{eqnarray*}
Taking the infimum over all $\xi_i\in {\mathbb R}$ yields
(\ref{finsum}).
\end{proof}
\end{lemma}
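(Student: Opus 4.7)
The plan is to reduce everything to the classical subadditivity of the non-increasing rearrangement, namely $(f+g)^*(t_1+t_2)\le f^*(t_1)+g^*(t_2)$, which is standard (see e.g.\ \cite[p.~41]{BS}). A straightforward induction on $k$, splitting the time variable evenly as $t=k\cdot(t/k)$ at each step, then yields the finite-sum version
$$\Big(\sum_{i=1}^k f_i\Big)^*(t)\le \sum_{i=1}^k f_i^*(t/k).$$

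To promote this $t$-pointwise bound to a bound on $\o_{\la}$, the key observation is that the single infimum over the centering constant $c$ in the definition of $\o_{\la}\big(\sum_i f_i;Q\big)$ can be relaxed to $k$ independent infima, one per summand. Given any $\xi_1,\dots,\xi_k\in\mathbb{R}$, set $c=\sum_i\xi_i$, so that $\sum_i f_i-c=\sum_i(f_i-\xi_i)$. Applying the displayed inequality to the functions $(f_i-\xi_i)\chi_Q$ at $t=\la|Q|$ gives
$$\Big(\big(\textstyle\sum_i f_i-c\big)\chi_Q\Big)^*(\la|Q|)\le \sum_{i=1}^k\big((f_i-\xi_i)\chi_Q\big)^*(\la|Q|/k).$$
The left-hand side is an upper bound for $\o_{\la}\big(\sum_i f_i;Q\big)$, while the right-hand side depends on the $\xi_i$ independently; taking an infimum in each $\xi_i\in\mathbb{R}$ separately turns each summand on the right into $\o_{\la/k}(f_i;Q)$, which is exactly (\ref{finsum}).

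I do not anticipate any real obstacle. The only mild subtlety is justifying the passage from one infimum (over $c$) to $k$ independent infima (over $\xi_1,\dots,\xi_k$), but this is immediate because, for each choice of the $\xi_i$'s, the number $c=\sum_i\xi_i$ is a legitimate competitor in the original $\inf_c$, so the resulting inequality holds uniformly in $\xi_1,\dots,\xi_k$ and the infimum may be taken coordinatewise without loss.
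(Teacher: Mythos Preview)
Your proof is correct and follows essentially the same route as the paper's: both use the subadditivity $(f+g)^*(t_1+t_2)\le f^*(t_1)+g^*(t_2)$, extend it to $k$ summands with $t$ split evenly, and then decouple the single infimum over $c$ into $k$ independent infima over centering constants $\xi_i$. The only cosmetic difference is that the paper absorbs the infimum over $c$ into the first summand (treating $f_1$ asymmetrically and optimizing over $\xi_2,\dots,\xi_k$), whereas you set $c=\sum_i\xi_i$ and treat all summands symmetrically; the logic is identical.
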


\section{Proof of Theorems \ref{LP} and \ref{SIN}}
\subsection{The intrinsic square function $\widetilde G_{\a}$}
For our purposes it will be more convenient to work with the
following variant of $G_{\a}$. Given a cube $Q\subset {\mathbb
R}^n$, set
$$T(Q)=\{(y,t)\in {\mathbb R}^n:y\in Q, \ell(Q)/2\le t<\ell(Q)\}.$$
Denote $\g_Q(f)^2=\int_{T(Q)}
\big(A_{\a}(f)(y,t)\big)^2\frac{dydt}{t^{n+1}}$ and let
$$
\widetilde G_{\a}(f)(x)^2=\sum_{Q\in {\mathcal
D}}\g_Q(f)^2\chi_{3Q}(x).
$$

\begin{lemma}\label{point} For any $x\in {\mathbb R}^n$,
\begin{equation}\label{poi}
G_{\a}(f)(x)\le \widetilde G_{\a}(f)(x)\le c(\a,n)G_{\a}(f)(x).
\end{equation}
\end{lemma}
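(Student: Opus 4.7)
The plan is to exploit the fact that the Carleson tents $\{T(Q)\}_{Q\in \mathcal{D}}$ tile ${\mathbb R}^{n+1}_+$ disjointly up to a null set: every $(y,t)$ lies in a unique $T(Q)$, with $\ell(Q)$ forced by $t<\ell(Q)\le 2t$ and $Q$ the unique dyadic cube of that side length containing $y$. This lets me rewrite
$$
\widetilde G_{\a}(f)(x)^2=\int_{E_x}\big(A_{\a}(f)(y,t)\big)^2\frac{dy\,dt}{t^{n+1}},\qquad E_x:=\bigcup_{Q\in {\mathcal D},\,x\in 3Q}T(Q),
$$
so both quantities in (\ref{poi}) are integrals of $A_\a(f)^2\,dy\,dt/t^{n+1}$ against subsets of ${\mathbb R}^{n+1}_+$, and the whole lemma reduces to comparing these two regions.

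For the left-hand inequality $G_{\a}(f)(x)\le \widetilde G_{\a}(f)(x)$, I would check $\G(x)\subset E_x$. If $(y,t)\in \G(x)$ and $(y,t)\in T(Q)$, then the Euclidean bound $|y-x|<t<\ell(Q)$ forces $|y-x|_{\infty}<\ell(Q)$ as well; since $y\in Q$ and $3Q$ contains every point within sup-distance $\ell(Q)$ of $Q$, we get $x\in 3Q$, hence $(y,t)\in E_x$.

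For the right-hand inequality, I would show $E_x\subset \G_{\b}(x)$ for a dimensional $\b$. If $x\in 3Q$ and $(y,t)\in T(Q)$, then $y\in Q\subset 3Q$, so $y$ and $x$ both lie in the axis-parallel cube $3Q$ of side $3\ell(Q)$, which gives $|y-x|\le 3\sqrt{n}\,\ell(Q)\le 6\sqrt{n}\,t$. Thus $\widetilde G_{\a}(f)(x)\le G_{\b,\a}(f)(x)$ for $\b=6\sqrt{n}$, and (\ref{estG}) finishes the proof.

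The argument is purely geometric; the only point requiring attention is the mismatch between the Euclidean norm used in the definition of $\G_{\b}(x)$ and the sup norm that is natural for cubes, which is absorbed into the dimensional constant $\b$. I do not expect any genuine obstacle.
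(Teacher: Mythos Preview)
Your proof is correct and follows essentially the same geometric approach as the paper: the paper shows $\Gamma(x)\cap T(Q)=\emptyset$ whenever $x\notin 3Q$ (the contrapositive of your inclusion $\Gamma(x)\subset E_x$), and then that $x\in 3Q$, $(y,t)\in T(Q)$ force $(y,t)\in\Gamma_{4\sqrt n}(x)$, finishing with (\ref{estG}) exactly as you do. The only differences are cosmetic---your explicit framing via the region $E_x$ and the slightly looser constant $6\sqrt n$ in place of $4\sqrt n$.
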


\begin{proof}
For any $x\not\in 3Q$ we have $\G(x)\cap T(Q)=\emptyset$, and hence
$$
\int_{\G(x)\cap T(Q)}
\big(A_{\a}(f)(y,t)\big)^2\frac{dydt}{t^{n+1}}\le
\g_Q(f)^2\chi_{3Q}(x).
$$
Therefore,
\begin{eqnarray*}
G_{\a}(f)(x)^2&=&\int_{\G(x)}
\big(A_{\a}(f)(y,t)\big)^2\frac{dydt}{t^{n+1}}\\
&=&\sum_{Q\in{\mathcal D}}\int_{\G(x)\cap T(Q)}
\big(A_{\a}(f)(y,t)\big)^2\frac{dydt}{t^{n+1}}\le \widetilde
G_{\a}(f)(x)^2.
\end{eqnarray*}

On the other hand, if $x\in 3Q$ and $(y,t)\in T(Q)$, then $|x-y|\le
2\sqrt n\ell(Q)\le 4\sqrt n t$. Thus,
\begin{eqnarray*}
&&\widetilde G_{\a}(f)(x)^2=\sum_{Q\in {\mathcal
D}}\g_Q(f)^2\chi_{3Q}(x)\\
&&\le \sum_{Q\in {\mathcal D}}\int_{T(Q)\cap \G_{4\sqrt n}(x)}
\big(A_{\a}(f)(y,t)\big)^2\frac{dydt}{t^{n+1}}=G_{4\sqrt
n,\a}(f)(x)^2.
\end{eqnarray*}
Combining this with (\ref{estG}), we get the right-hand side of
(\ref{poi}).
\end{proof}

\subsection{A local mean oscillation estimate of $\widetilde
G_{\a}$} The key role in our proof will be played by the following
lemma.

\begin{lemma}\label{key}
For any cube $Q\in {\mathcal D}$,
$$
\o_{\la}(\widetilde G_{\a}(f)^2;Q)\le
c(n,\a,\la)\left(\frac{1}{|15Q|}\int_{15Q}|f|dx\right)^2.
$$
\end{lemma}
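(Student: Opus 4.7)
The plan is to follow the template of the dyadic square function argument in \cite{CMP}, using Lemmas \ref{wil} and \ref{prop} to turn the supports $3Q$ into a nested family and using the weak-type bound \eqref{weak} for $G_\a$ at the end. Let $Q_0$ denote the cube on which we compute the oscillation. By Lemma \ref{prop}, for each $k=1,\dots,3^n$ I would fix $Q_k\in\mathcal{D}_k$ with $Q_0\subset 3Q_k\subset 5Q_0$, and write
\[
\widetilde G_\a(f)^2=\sum_{k=1}^{3^n}\sum_{Q\in\mathcal{D}_k}\g_Q(f)^2\chi_{3Q}.
\]
Lemma \ref{wil} guarantees that inside each $\mathcal{D}_k$ the triples $\{3Q\}$ are nested or disjoint, so for $x\in Q_0$ every $Q\in\mathcal{D}_k$ falls into exactly one of three cases: (a) $3Q\supseteq 3Q_k$, in which case $\chi_{3Q}\equiv 1$ on $Q_0\subset 3Q_k$; (b) $3Q\cap 3Q_k=\emptyset$, in which case $\chi_{3Q}\equiv 0$ on $Q_0$; or (c) $3Q\subsetneq 3Q_k$. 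Letting $c_0$ be the sum of the $\g_Q(f)^2$ terms from case (a) across all $k$, only case (c) contributes to $\widetilde G_\a(f)^2-c_0$ on $Q_0$.

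The main step is to show that the variable part $\widetilde G_\a(f)^2-c_0$ is controlled by $\widetilde G_\a(f_1)^2$, where $f_1=f\chi_{15Q_0}$ and $f_2=f-f_1$. I would verify that $\g_Q(f_2)=0$ for every cube $Q$ in case (c). Indeed, $3Q\subsetneq 3Q_k\subset 5Q_0$ forces any $(y,t)\in T(Q)$ to satisfy $y\in 5Q_0$ and $t<\ell(Q_k)=\ell(Q_0)$; then for any $\f\in\mathcal{C}_\a$ the convolution $f_2*\f_t(y)$ integrates $f_2$ over $B(y,t)$, which lies in the sup-norm neighborhood $7Q_0\subset 15Q_0$ where $f_2$ vanishes. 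Thus $A_\a(f)(y,t)=A_\a(f_1)(y,t)$ on $T(Q)$, so $\g_Q(f)=\g_Q(f_1)$ for such $Q$, and summing gives $0\le \widetilde G_\a(f)^2-c_0\le\widetilde G_\a(f_1)^2$ pointwise on $Q_0$.

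To finish, since the infimum in the definition of $\o_\la$ is taken over all real constants,
\[
\o_\la(\widetilde G_\a(f)^2;Q_0)\le\bigl((\widetilde G_\a(f)^2-c_0)\chi_{Q_0}\bigr)^*(\la|Q_0|)\le\bigl(\widetilde G_\a(f_1)^2\bigr)^*(\la|Q_0|).
\]
Using $(g^2)^*(t)=(g^*(t))^2$ for $g\ge 0$ together with the pointwise estimate $\widetilde G_\a(f_1)\le c\,G_\a(f_1)$ from Lemma \ref{point} bounds this by $c^2\bigl(G_\a(f_1)^*(\la|Q_0|)\bigr)^2$, and the weak-type inequality \eqref{weak} yields $G_\a(f_1)^*(\la|Q_0|)\le c(n,\a)\|f_1\|_1/(\la|Q_0|)$. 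Since $\|f_1\|_1=\int_{15Q_0}|f|$ and $|Q_0|=15^{-n}|15Q_0|$, the required bound follows. The main delicacy I anticipate is the case analysis in (a)--(c) and the geometric check that case (c) cubes have $3Q\subset 5Q_0$; this is exactly what forces the appearance of the factor $15$ in $15Q_0$, via the containment $B(y,t)\subset 7Q_0\subset 15Q_0$ used to kill $f_2*\f_t$.
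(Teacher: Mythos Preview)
Your proof is correct and follows the same route as the paper: use Lemmas \ref{wil} and \ref{prop} to split off a constant part, show that only $f\chi_{15Q_0}$ matters in the remaining variable part, and finish with Lemma \ref{point} and the weak-type bound \eqref{weak}. The only minor difference is that the paper first invokes Lemma \ref{simpr} to reduce to a single family $\mathcal D_k$ and then passes to the larger cube $3Q_k$ (localizing to $f\chi_{9Q_k}\subset f\chi_{15Q_0}$), whereas you bypass Lemma \ref{simpr} by subtracting one aggregate constant $c_0$ and work directly on $Q_0$ --- a slight streamlining of the same argument.
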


\begin{proof}
Applying Lemma \ref{wil}, we can write
$$
\widetilde G_{\a}(f)(x)^2=\sum_{k=1}^{3^n}\sum_{Q\in {\mathcal
D}_k}\g_Q(f)^2\chi_{3Q}(x)\equiv \sum_{k=1}^{3^n}\widetilde
G_{\a,k}(f)(x)^2.
$$
Hence, by Lemma \ref{simpr},
$$
\o_{\la}(\widetilde G_{\a}(f)^2;Q)\le\sum_{k=1}^{3^n}
\o_{\la/3^n}(\widetilde G_{\a,k}(f)^2;Q).
$$

By Lemma \ref{prop}, for each $k=1,\dots,3^n$ there exists a cube
$Q_k\in {\mathcal D}_k$ such that $Q\subset 3Q_k\subset 5Q$. Hence,
\begin{eqnarray*}
&&\inf_{c\in {\mathbb R}}\Big(\big(\widetilde
G_{\a,k}(f)^2-c\big)\chi_Q\Big)^*\big(\la|Q|/3^n\big)\\
&&\le \inf_{c\in {\mathbb R}}\Big(\big(\widetilde
G_{\a,k}(f)^2-c\big)\chi_{3Q_k}\Big)^*\big(\la|Q|/3^n\big).
\end{eqnarray*}
Using the main property of cubes from the family ${\mathcal D}_k$
(expressed in Lemma~\ref{wil}), for any $x\in 3Q_k$ we have
\begin{equation}\label{est}
\widetilde G_{\a,k}(f)(x)^2=\sum_{Q\in {\mathcal D}_k:3Q\subset
3Q_k}\g_Q(f)^2\chi_{3Q}(x)+\sum_{Q\in {\mathcal D}_k:3Q_k\subset
3Q}\g_Q(f)^2.
\end{equation}

Arguing as in the proof of Lemma \ref{point}, we obtain
\begin{eqnarray*}
&&\sum_{Q\in {\mathcal D}_k:3Q\subset
3Q_k}\g_Q(f)^2\chi_{3Q}(x)\\
&&\le \sum_{Q\in {\mathcal D}_k:3Q\subset 3Q_k} \int_{T(Q)\cap
\G_{4\sqrt n}(x)} \big(A_{\a}(f)(y,t)\big)^2\frac{dydt}{t^{n+1}}\\
&&\le \int_{\widehat T(3Q_k)\cap \G_{4\sqrt n}(x)}
\big(A_{\a}(f)(y,t)\big)^2\frac{dydt}{t^{n+1}},
\end{eqnarray*}
where $\widehat T(3Q_k)=\{(y,t):y\in 3Q_k,0<t\le \ell(3Q_k)\}$. For
any $\f$ supported in $\{x:|x|\le 1\}$ and for $(y,t)\in \widehat
T(3Q_k)$ we have
$$f*\f_t(y)=(f\chi_{9Q_k})*\f_t(y).$$
Therefore,
$$
\int_{\widehat T(3Q_k)\cap \G_{4\sqrt n}(x)}
\big(A_{\a}(f)(y,t)\big)^2\frac{dydt}{t^{n+1}}\le G_{4\sqrt
n,\a}(f\chi_{9Q_k})(x)^2.
$$
Combining the letter estimates with (\ref{est}) and setting
$$c=\sum_{Q\in {\mathcal D}_k:3Q_k\subset 3Q}\g_Q(f)^2,$$
we get
$$0\le \widetilde G_{\a,k}(f)(x)^2-c\le G_{4\sqrt
n,\a}(f\chi_{9Q_k})(x)^2\quad(x\in 3Q_k).$$ From this, by
(\ref{weak}) and (\ref{estG}) (we use also that $3Q_k\subset 5Q$
implies $9Q_k\subset 15Q$),
\begin{eqnarray*} &&\inf_{c\in
{\mathbb R}}\Big(\big(\widetilde
G_{\a,k}(f)^2-c\big)\chi_{3Q_k}\Big)^*\big(\la|Q|/3^n\big)\\
&&\le c(n,\a)\big(G_{\a}(f\chi_{9Q_k})\big)^*(\la|Q|/3^n)^2\\
&&\le c\left(\frac{3^n}{\la|Q|}\int_{9Q_k}|f|\right)^2 \le
c\left(\frac{3^n}{\la|Q|}\int_{15Q}|f|\right)^2,
\end{eqnarray*}
which completes the proof.
\end{proof}

\subsection{An auxiliary operator} Let $\{Q_j^k\}$ be a family
of cubes appeared in Theorem \ref{main}.

Given $\ga>1$, consider the operator ${\mathcal A}_{\ga}$ defined by
$$
{\mathcal A}_{\ga}f(x)=\sum_{k,j}\left(\frac{1}{|\ga
Q_j^k|}\int_{\ga Q_j^k}|f|\right)^2\chi_{Q_j^k}(x).
$$
This object will appear naturally after the combination of Lemma
\ref{key} with Theorem \ref{main}.

\begin{lemma}\label{estag} For any $f\in L^3(w)$,
\begin{equation}\label{ag}
\left(\int_{Q_{0}}({\mathcal A}_{\ga}f)^{3/2}w\,dx\right)^{2/3}\le
c(n,\ga)\|w\|_{A_3}\|f\|_{L^3(w)}^2.
\end{equation}
\end{lemma}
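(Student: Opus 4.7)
The plan is to dualize the $L^{3/2}(w)$ norm and then cash in the sparseness of $\{Q_j^k\}$ built into Theorem~\ref{main}. Since $(3/2)'=3$, the left-hand side of \eqref{ag} equals
$$\sup_h\int_{Q_0}\mathcal A_\gamma f\cdot hw\,dx,$$
where the supremum is taken over nonnegative $h$ with $\|h\|_{L^3(w)}\le 1$. Unwinding $\mathcal A_\gamma f$, it suffices to show that for every such $h$,
$$S:=\sum_{k,j}\Big(\frac{1}{|\gamma Q_j^k|}\int_{\gamma Q_j^k}|f|\Big)^2\int_{Q_j^k}hw\,dx\le c(n,\gamma)\|w\|_{A_3}\|f\|_{L^3(w)}^2\|h\|_{L^3(w)}.$$

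To bound $S$ I would introduce the dual weight $\sigma=w^{-1/2}$ and the maximal functions $M_\sigma F(y)=\sup_{y\in Q}\sigma(Q)^{-1}\int_Q F\sigma\,dx$ and $M_w F(y)=\sup_{y\in Q}w(Q)^{-1}\int_Q Fw\,dx$, each bounded on its $L^3$ space with a constant depending only on $n$. For every $y\in Q_j^k$ there are elementary pointwise bounds
$$\frac{1}{|\gamma Q_j^k|}\int_{\gamma Q_j^k}|f|\le\frac{\sigma(\gamma Q_j^k)}{|\gamma Q_j^k|}M_\sigma(f\sigma^{-1})(y),\qquad \int_{Q_j^k}hw\le w(Q_j^k)\,M_w h(y),$$
and the $A_3$ inequality $w(\gamma Q_j^k)\sigma(\gamma Q_j^k)^2\le\|w\|_{A_3}|\gamma Q_j^k|^3$, combined with $w(Q_j^k)\le w(\gamma Q_j^k)$, compresses the weight prefactor to $\gamma^n\|w\|_{A_3}|Q_j^k|$. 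Multiplying the two displayed bounds therefore yields, for each $y\in Q_j^k$,
$$\Big(\frac{1}{|\gamma Q_j^k|}\int_{\gamma Q_j^k}|f|\Big)^2\int_{Q_j^k}hw\le\gamma^n\|w\|_{A_3}\,|Q_j^k|\,M_\sigma(f\sigma^{-1})(y)^2\,M_w h(y).$$

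Sparseness enters through the sets $E_j^k:=Q_j^k\setminus\Omega_{k+1}$, which by parts (ii)--(iv) of Theorem~\ref{main} are pairwise disjoint and satisfy $|E_j^k|\ge|Q_j^k|/2$. Averaging the last display over $y\in E_j^k$ turns the factor $|Q_j^k|$ into an integral over $E_j^k$, and summing gives
$$S\le 2\gamma^n\|w\|_{A_3}\int_{Q_0}M_\sigma(f\sigma^{-1})^2\,M_w h\,dy.$$
Writing the integrand as $\bigl(M_\sigma(f\sigma^{-1})^2\sigma^{2/3}\bigr)\bigl(M_w h\cdot w^{1/3}\bigr)$ (the weights balance because $\sigma^{2/3}w^{1/3}=1$) and applying Hölder with exponents $3/2$ and $3$ bounds this integral by $\bigl(\int M_\sigma(f\sigma^{-1})^3\sigma\bigr)^{2/3}\bigl(\int(M_w h)^3 w\bigr)^{1/3}$, which the two maximal inequalities reduce to $c\|f\|_{L^3(w)}^2\|h\|_{L^3(w)}$. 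The only delicate choice is the Hölder splitting that uses $\sigma^{-2}=w$: it places both maximal functions on their natural $L^3$ spaces so that $\|w\|_{A_3}$ enters linearly through a single invocation, which is exactly the correct dependence.
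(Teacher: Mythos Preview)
Your argument is essentially the paper's own proof: dualize in $L^{3/2}(w)$, insert the $A_3$ characteristic once via $w(\gamma Q_j^k)\sigma(\gamma Q_j^k)^2\le\|w\|_{A_3}|\gamma Q_j^k|^3$, pass to the pairwise disjoint sets $E_j^k$ coming from Theorem~\ref{main}, and finish with H\"older and the $L^3$-boundedness of the weighted maximal functions. The structure and the key idea are identical.

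The one point that needs care is your claim that the \emph{uncentered} maximal operators $M_\sigma$ and $M_w$ are bounded on $L^3(\sigma)$ and $L^3(w)$ with constants depending only on $n$. For an arbitrary locally finite measure this is not a standard fact (and is known to fail at the weak $(1,1)$ level in dimensions $n\ge 2$); if you invoke doubling of $w$ and $\sigma$ you pick up extra powers of $\|w\|_{A_3}$ and lose the sharp dependence. The paper avoids this by using the \emph{centered} maximal functions $M^c_\nu$, whose $L^p(\nu)$ bounds follow from the Besicovitch covering theorem with constants depending only on $n$ and $p$; to make the centering work it enlarges $\gamma Q_j^k$ to $3\gamma Q_j^k$ so that for each $x\in E_j^k$ one can find a cube centered at $x$ trapped between $\gamma Q_j^k$ and $3\gamma Q_j^k$. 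With that small adjustment (replace $M_\sigma,M_w$ by $M^c_\sigma,M^c_w$ and $\gamma Q_j^k$ by $3\gamma Q_j^k$ in the $A_3$ step), your proof coincides with the paper's.
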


\begin{proof} We follow \cite{CMP} with some minor modifications.
By duality, (\ref{ag}) is equivalent to that for any $h\ge 0$ with
$\|h\|_{L^3(w)}=1$,
\begin{eqnarray}
\int_{Q_0}({\mathcal
A}f)hw\,dx&=&\sum_{k,j}\Big(\frac{1}{|\ga Q_j^k|}\int_{\ga Q_j^k}|f|\Big)^2\int_{Q_j^k}hw\label{ag1}\\
&\le& c(n,\ga)\|w\|_{A_3}\|f\|_{L^3(w)}^2.\nonumber
\end{eqnarray}

Let $E_j^k=Q_j^k\setminus \O_{k+1}$. It follows from the properties
(ii)-(iv) of Theorem \ref{main} that $|E_j^k|\ge |Q_j^k|/2$ and the
sets $E_j^k$ are pairwise disjoint. Hence, setting
$A_3(Q)=\frac{w(Q)(w^{-1/2}(Q))^2}{|Q|^3}$ (we use the notion
$\nu(Q)=\int_Q\nu(x)dx$), we have
\begin{eqnarray*}
&&\Big(\frac{1}{|\ga Q_j^k|}\int_{\ga Q_j^k}|f|\Big)^2\int_{Q_j^k}hw
\le 2(3\ga)^nA_3(3\ga Q_j^k)\\
&&\times\Big(\frac{1}{w^{-1/2}(3\ga Q_j^k)}\int_{\ga
Q_j^k}|f|\Big)^2\Big(\frac{1}{w(3\ga Q_j^k)}\int_{\ga
Q_j^k}hw\Big)|E_j^k|\\
&&\le
2(3\ga)^n\|w\|_{A_3}\int_{E_j^k}M^c_{w^{-1/2}}(fw^{1/2})^2M^c_wh\,dx
\end{eqnarray*}
(here $M^c_{\nu}f(x)=\sup_{Q\ni x}\frac{1}{\nu(Q)}\int_Q|f|\nu\,dx$,
where the supremum is taken over all cubes $Q$ centered at $x$).

Applying the latter estimate along with H\"older's inequality and
using the fact (based on the Besicovitch covering theorem) that the
$L^p(\nu)$-norm of $M^c_{\nu}$ does not depend on $\nu$, we get
\begin{eqnarray*}
&&\sum_{k,j}\Big(\frac{1}{|\ga Q_j^k|}\int_{\ga
Q_j^k}|f|\Big)^2\int_{Q_j^k}hw\\
&&\le 2(3\ga)^n\|w\|_{A_3}\int_{{\mathbb
R}^n}M^c_{w^{-1/2}}(fw^{1/2})^2M^c_wh\,dx\\
&&\le
2(3\ga)^n\|w\|_{A_3}\|M^c_{w^{-1/2}}(fw^{1/2})\|^2_{L^3(w^{-1/2})}\|M^c_wh\|_{L^3(w)}\\
&&\le c(n,\ga)\|w\|_{A_3}\|f\|_{L^3(w)}^2,
\end{eqnarray*}
and therefore the proof is complete.
\end{proof}

\subsection{Proof of Theorem \ref{LP}}
First, by (\ref{poest}) and Lemma~\ref{point}, it is enough to prove
(\ref{lp}) for $\widetilde G_{\a}$. Second, as we mentioned in the
Introduction, the sharp version of the Rubio de Francia
extrapolation theorem proved in \cite{DGPP} says that (\ref{lp}) for
$p=3$ implies (\ref{lp}) for any $p>1$. Therefore, our aim is to
show that
\begin{equation}\label{suff}
\|\widetilde G_{\a}(f)\|_{L^3(w)}\le
c(\a,n)\|w\|_{A_3}^{1/2}\|f\|_{L^3(w)}.
\end{equation}
Further, by a standard approximation argument, it suffices to prove
(\ref{suff}) for any $f\in L^1({\mathbb R}^n)$. By the weak type
$(1,1)$ property of $G_{\a}$ (\ref{weak}) and by (\ref{infty}) and
(\ref{poi}), for such $f$ we have
\begin{equation}\label{limit}
\lim_{|Q|\to \infty}|m_Q(\widetilde G_{\a}(f)^2)|\le
c\lim_{|Q|\to\infty}(G_{\a}f)^*(|Q|/2)^2=0.
\end{equation}

Now, following \cite{CMP}, denote by ${\mathbb R}^n_i, 1\le i\le
2^n$ the $n$-dimensional quadrants in ${\mathbb R}^n$, that is, the
sets $I^{\pm}\times I^{\pm}\times\dots\times I^{\pm}$, where
$I^+=[0,\infty)$ and $I^{-}=(-\infty,0)$. For each $i,1\le i\le
2^n,$ and for each $N>0$ let $Q_{N,i}$ be the dyadic cube adjacent
to the origin of side length $2^N$ that is contained in ${\mathbb
R}^n_i$. By (\ref{limit}) and by Fatou's lemma,
\begin{eqnarray}
&&\left(\int_{{\mathbb R}^n_i}\widetilde
G_{\a}(f)(x)^3w(x)dx\right)^{2/3}\nonumber\\
&&\le \liminf_{N\to\infty}\left(\int_{Q_{N,i}}|\widetilde
G_{\a}(f)(x)^2-m_{Q_{N,i}}(\widetilde
G_{\a}(f)^2)|^{3/2}w(x)dx\right)^{2/3}.\label{integ}
\end{eqnarray}

Combining Theorem \ref{main} (where $M_{1/4;Q_0}^{\#}f$ is replaced
by $M_{1/4;Q_0}^{\#,d}f$ from Remark \ref{rem}) with Lemma \ref{key}
(we use that the cubes $Q_j^k\in {\mathcal D}(Q_{N,i})$ are dyadic,
and hence the cubes $\widehat Q_j^k$ are dyadic as well; also,
$\widehat Q_j^k\subset 3Q_j^k$), we get that for all $x\in Q_{N,i}$,
\begin{equation}\label{inter}
|\widetilde G_{\a}(f)(x)^2-m_{Q_{N,i}}(\widetilde G_{\a}(f)^2)|\\
\le c_n\big(Mf(x)^2+{\mathcal A}_{45}f(x)\big),
\end{equation}
where
$$
Mf(x)=\sup_{Q\ni x}\frac{1}{|Q|}\int_Q|f(y)|dy
$$
is the Hardy-Littlewood maximal operator.

It was proved by Buckley \cite{B} that
\begin{equation}\label{buck}
\|M\|_{L^p(w)}\le
c(p,n)\|w\|_{A_p}^{\frac{1}{p-1}}\quad(1<p<\infty).
\end{equation}
Therefore,
$$
\left(\int_{Q_{N,i}}(Mf)^3w\right)^{2/3}\le
c(n)\|w\|_{A_3}\|f\|_{L^3(w)}^2.
$$
Applying this along with (\ref{integ}), (\ref{inter}) and Lemma
\ref{estag}, we get
$$
\left(\int_{{\mathbb R}^n_i}\widetilde
G_{\a}(f)(x)^3w(x)dx\right)^{2/3}\le
c(\a,n)\|w\|_{A_3}\|f\|_{L^3(w)}^2\quad(1\le i\le 2^n).
$$
Therefore,
\begin{eqnarray*}
\int_{{\mathbb R}^n}\widetilde
G_{\a}(f)(x)^3w(x)dx&=&\sum_{i=1}^{2^n} \int_{{\mathbb
R}^n_i}\widetilde G_{\a}(f)(x)^3w(x)dx\\
&\le& 2^n(c(\a,n)\|w\|_{A_3})^{3/2}\|f\|_{L^3(w)}^3,
\end{eqnarray*}
which completes the proof.

\subsection{Proof of Theorem \ref{SIN}} We use exactly the same
approach as in the proof of \cite[Corollary 1.4]{L2}. The proof is
just a combination of several known results.

First, it was proved by Wilson (\cite{Wi2} or \cite[p. 155]{Wi3})
that there exists $\a\le 1$ (depending on $T$) such that for all
$x\in {\mathbb R}^n$,
$$
S_{\psi}(Tf)(x)\le c(T,\psi,n)G_{\a}(f)(x).
$$
Exactly the same proof yields
\begin{equation}\label{poes}
S_{\psi,\b}(Tf)(x)\le c(T,\psi,n,\b)G_{\a}(f)(x)\quad (\b\ge 1).
\end{equation}

Next, define
$$\|w\|_{A_{\infty}}=\sup_Q\frac{1}{w(Q)}\int_QM(w\chi_Q)(x)dx.$$
It follows easily from (\ref{buck}) (see, e.g., \cite[Lemma
3.5]{L1}) that for any $p>1$,
\begin{equation}\label{ainf}
\|w\|_{A_{\infty}}\le c(p,n)\|w\|_{A_p}.
\end{equation}

Assuming that $\psi$ additionally satisfies
$$\int_s^{\infty}|\widehat \psi(t,0,\dots,0)|^2\frac{dt}{t}\ge
c(1+s)^{-\xi}$$ for some $s>0$, it was shown by Wilson \cite{Wi1}
that for any $p>0$,
\begin{equation}\label{GN}
\|{\mathcal M}(f)\|_{L^p(w)}\le
c(n,p,\psi)\|w\|_{A_{\infty}}^{1/2}\|S_{\psi,3\sqrt n}(f)\|_{L^p_w},
\end{equation}
where ${\mathcal M}$ is the grand maximal function. Note that
(\ref{GN}) is not contained in \cite{Wi1} in such an explicit form.
We refer to \cite[Proposition 2.3]{L2} for some comments about this.
Observe also that the proof of (\ref{GN}) is based essentially on
the deep theorem of Chang-Wilson-Wolff \cite{CWW}.

Further, it is well-known \cite[pp. 67-68]{St} that for all $x\in
{\mathbb R}^n$,
$$
T^*f(x)\le c(n,T)\big({\mathcal M}(Tf)+Mf(x)\big).
$$
Combining this with (\ref{poes}), (\ref{ainf}) and (\ref{GN}), we
get
\begin{eqnarray*}
\|T^*f\|_{L^p(w)}&\le& c\|w\|_{A_p}^{1/2}\|S_{\psi,3\sqrt
n}(Tf)\|_{L^p(w)}+c\|Mf\|_{L^p(w)}\\
&\le& c\|w\|_{A_p}^{1/2}\|G_{\a}(f)\|_{L^p(w)}+c\|Mf\|_{L^p(w)}.
\end{eqnarray*}
This estimate along with Theorem \ref{LP} and (\ref{buck}) for $p\ge
3$ yields
$$
\|T^*f\|_{L^p(w)}\le c\|w\|_{A_p}\|f\|_{L^p(w)},
$$
which completes the proof for $T^*$.

The above estimate for $T^*$ implies clearly the same estimate for
$T$:
$$
\|T\|_{L^p(w)}\le c\|w\|_{A_p}\quad(p\ge 3),
$$
which by duality yields
$$
\|T\|_{L^p(w)}\le
c\|w^{-\frac{1}{p-1}}\|_{A_{p'}}=c\|w\|_{A_p}^{\frac{1}{p-1}}\quad(1<p\le
3/2),
$$
and therefore, the theorem is proved.


\begin{thebibliography}{99}

\bibitem{BS}
C. Bennett and R. Sharpley, {\it Interpolation of Operators},
Academic Press, New York, 1988.

\bibitem{B}
S.M. Buckley, {\it Estimates for operator norms on weighted spaces
and reverse Jensen inequalities}, Trans. Amer. Math. Soc., {\bf 340}
(1993), no. 1, 253--272.

\bibitem{C}
L. Carleson, {\it Two remarks on $H^1$ and $BMO$}, Adv. Math. {\bf
22} (1976), 269--277.

\bibitem{CWW}
S.-Y.A. Chang, J.M. Wilson and T. Wolff, {\it Some weighted norm
inequalities concerning the Schr\"odinger operator}, Comm. Math.
Helv., {\bf 60} (1985), 217--246.

\bibitem{CMP}
D. Cruz-Uribe, J.M. Martell and C. P\'erez, {\it Sharp weighted
estimates for classical operators}, preprint. Available at
http://arxiv.org/abs/1001.4254.

\bibitem{DGPP}
O. Dragi\v{c}evi\'c, L. Grafakos, M.C. Pereyra and S. Petermichl,
{\it Extrapolation and sharp norm estimates for classical operators
on weighted Lebesgue spaces}, Publ. Math., {\bf 49} (2005), no. 1,
73--91.

\bibitem{F}
N. Fujii, {\it A proof of the Fefferman-Stein-Str\"omberg inequality
for the sharp maximal functions}, Proc. Amer. Math. Soc. {\bf 106}
(1989), no. 2, 371--377.

\bibitem{GJ}
J.B. Garnett and P.W. Jones, {\it BMO from dyadic BMO}, Pac. J.
Math.,  {\bf 99} (1982), 351--371.

\bibitem{HTV}
S. Hukovic, S. Treil and A. Volberg, {\it The Bellman functions and
sharp weighted inequalities for square functions}, in: {\it
``Complex analysis, operators, and related topics''}, Oper. Theory
Adv. Appl., {\bf 113}, Birkh\"auser, Basel, 2000, 97--113.

\bibitem{HLRV}
P.T. Hyt\"onen, M. Lacey, M.C. Reguera and A. Vagharshakyan, {\it
Weak and strong-type estimates for Haar shift operators: sharp power
on the $A_p$ characteristic}, preprint. Available at
http://arxiv.org/abs/0911.0713

\bibitem{LPR}
M.T. Lacey, S. Petermichl and M.C. Reguera, {\it Sharp $A_2$
inequality for Haar Shift Operators}, Math. Ann. (2009), available
at http://arxiv.org/abs/0906.1941.

\bibitem{L1}
A.K. Lerner, {\it On some sharp weighted norm inequalities}, J.
Funct. Anal., {\bf 232} (2006), no. 2, 477--494.

\bibitem{L2}
A.K. Lerner, {\it On some weighted norm inequalities for
Littlewood-Paley operators}, Illinois J. Math., {\bf 52} (2008) no.
2, 653--666.

\bibitem{L3}
A.K. Lerner, {\it A pointwise estimate for the local sharp maximal
function with applications to singular integrals}, Bull. London
Math. Soc., to appear. Available at {\tt
http://u.math.biu.ac.il/$\tilde{~}$lernera/publications.html}

\bibitem{NTV}
F. Nazarov, S. Treil and A. Volberg, {\it Two weight inequalities
for individual Haar multipliers and other well localized operators},
Math. Res. Lett., {\bf 15} (2008), no. 3, 583-–597.

\bibitem{P1}
S. Petermichl, {\it The sharp bound for the Hilbert transform on
weighted Lebesgue spaces in terms of the classical $A_p$-
characteristic}, Amer. J. Math., {\bf 129} (2007), no. 5,
1355--1375.

\bibitem{P2}
S. Petermichl, {\it The sharp weighted bound for the Riesz
transforms}, Proc. Amer. Math. Soc., {\bf 136} (2008), no. 4,
1237--1249.

\bibitem{PV}
S. Petermichl and A. Volberg, {\it Heating of the Ahlfors-Beurling
operator: weakly quasiregular maps on the plane are quasiregular},
Duke Math. J. {\bf 112} (2002), no. 2, 281--305.

\bibitem{St}
E.M. Stein, {\it Singular integrals and differentiability properties
of functions}, Princeton Univ. Press, 1970.

\bibitem{V}
A. Vagharshakyan, {\it Recovering singular integrals from Haar
shifts}, preprint. Available at http://arxiv.org/abs/0911.4968

\bibitem{Wi1}
J.M. Wilson, {\it Weighted norm inequalities for the continuous
square functions}, Trans. Amer. Math. Soc., {\bf 314} (1989),
661--692.

\bibitem{Wi2}
J.M. Wilson, {\it The intrinsic square function}, Rev. Mat.
Iberoamericana,  {\bf 23} (2007), 771--791.

\bibitem{Wi3}
J.M. Wilson, {\it Weighted Littlewood-Paley theory and
exponential-square integrability}, Lecture Notes in Math., 1924,
Springer-Verlag 2008.

\bibitem{Wit1}
J. Wittwer, {\it A sharp estimate on the norm of the martingale
transform}, Math. Res. Lett. {\bf 7} (2000), no. 1, 1--12.

\bibitem{Wit2}
J. Wittwer, {\it A sharp estimate on the norm of the continuous
square function}, Proc. Amer. Math. Soc. {\bf 130} (2002), no. 8,
2335--2342.



\end{thebibliography}
\end{document}